\DeclareMathOperator{\Pic}{Pic}
\DeclareMathOperator{\Aut}{Aut}
\newtheorem{theorem}{Theorem}
\newtheorem{definition}[theorem]{Definition}
\newtheorem{proposition}[theorem]{Proposition}
\newtheorem{remark}[theorem]{Remark}
\newtheorem{rem}[theorem]{Remark}
\def\m{\mathbb}
\date{}
\begin{document}
\title{Computation of Singular Godeaux Surfaces\\
and a New Explicit Fake Quadric}

\author{Carlos Rito}
\maketitle
\vspace{-.8cm}
\begin{center}
\large With an Appendix by Christian Gleissner and Noah Ruhland
\end{center}
\vspace{.2cm}

\begin{abstract}
We present a computational method for detecting highly singular members in families of algebraic varieties. Applying this approach to a family of numerical Godeaux surfaces, we obtain explicit examples with many singularities. In particular, we construct a Godeaux surface whose singular locus consists of two $\mathsf A_1$ and two $\mathsf A_3$ singularities. We show that this surface admits a $\mathbb{Z}/2 \times \mathbb{Z}/4$ abelian cover which is a smooth minimal surface of general type with invariants $K^2=8$ and $p_g=0$, i.e.\ a fake quadric. Together with the result in the Appendix, this provides the first explicit construction of a fake quadric that does not arise as a quotient of a product of curves.

\noindent 2020 MSC: 14J29, 14Q10, 14Q15

\end{abstract}

\section{Introduction}

A \emph{fake quadric} is a smooth minimal complex algebraic surface of general type with the same invariants
as the quadric surface in $\mathbb P^3$, namely $K^2=8,$ \mbox{$p_g=q=0$}.
These surfaces form an important class in the geography of surfaces of general type. While fake projective planes are known to be uniformized by the complex $2$-ball, the situation for fake quadrics is much less clear. All examples constructed so far are uniformized by the bidisk $\mathbb{H}\times \mathbb{H}$, but determining whether there exists a fake quadric with a different universal cover remains a central open problem in the area.

As explained e.g. in \cite{DzRo},
the fake quadrics that are uniformized by the bidisk can be divided into two classes:
surfaces isogenous to a product of curves, which may admit moduli, and quaternionic fake quadrics, which are rigid.
The former have been classified by Bauer, Catanese and Grunewald \cite{BCG},
the latter are analogues of fake projective planes, sharing with them both rigidity and the challenge of explicit construction.
Indeed, the first explicit construction (by equations) of a fake projective plane was obtained only relatively recently
by Borisov and Keum \cite{BK}, highlighting the difficulty of such constructions.

The motivation for this work arose from the study of automorphisms of quaternionic fake quadrics by
D\v{z}ambi\'c and Roulleau \cite{DzRo}.
They showed that if such a surface admits a group of automorphisms isomorphic to $(\mathbb Z/2)^3$ or $\mathbb Z/2\times\mathbb Z/4$,
then its quotient is a numerical Godeaux surface with singular set $7\mathsf{A}_1$ or $2\mathsf{A}_1+2\mathsf{A}_3,$ respectively.
But they weren't able to produce examples for these cases.
Then results of Bauer-Pignatelli \cite{BP} and Frapporti-Pignatelli \cite{FP} imply that no $\mathbb Z/2$-Godeaux surface with these singularities can occur as a quotient of a product of curves.
This suggested that precisely these configurations might lead to interesting new examples of fake quadrics.

The present paper has two main contributions. First, we introduce an efficient computational method for detecting highly singular members in families of algebraic varieties, based on interpolation over finite fields and lifting to characteristic zero.
Second, applying this to a family of $\mathbb Z/2$-Godeaux surfaces, we obtain explicit surfaces with many singularities,
in particular a 2-parameter family of $\mathbb Z/2$-Godeaux surfaces with 6 nodes.
We were unable to identify a 7-nodal surface in this family (we conjecture its non-existence),
but we found a $\mathbb{Z}/2$-Godeaux surface with singular set \mbox{$2\mathsf{A}_1+2\mathsf{A}_3.$}
We show that it admits a $\mathbb{Z}/2 \times \mathbb{Z}/4$ abelian cover which is smooth, minimal,
and has invariants $K^2=8$, $p_g=0$, hence a fake quadric.

This surface is defined over the rational numbers, and hence invariant under complex conjugation.
Although the defining equations involve only relatively small rational coefficients,
the computation of the auxiliary curves providing the divisibility conditions required for the
abelian cover proved to be highly demanding. Over finite fields these computations were already difficult,
and lifting them to characteristic zero introduced enormous rational numbers.
In fact, it was necessary to repeat the calculations for more than 600 distinct primes in order to achieve a successful lifting.

A crucial input is a result established in the Appendix by Gleissner and Ruhland: every automorphism of a variety isogenous to a product of curves lifts to an automorphism of the covering curves. This ensures that our example does not arise from a product of curves. Combining their result with our construction, we obtain the
\emph{first explicit construction of a fake quadric that is not a quotient of a product of curves}.

Whether our surface is uniformized by the bidisk or not remains open. If it is, this would yield the first explicit construction of a quaternionic fake quadric. Unlike fake projective planes, which are completely classified and where the geometry of one example was sufficiently understood to guide the explicit construction, quaternionic fake quadrics are not classified.
The work of Linowitz, Stover and Voight \cite{LSV} provides only a list of hypothetical maximal lattices that
might contain the group of a fake quadric, but offers no indication of the geometry of such a surface.

All computations were performed in Magma \cite{BCP} and are available as arXiv ancillary files.
The file Main.txt loads the other scripts containing the large defining equations and performs
all necessary verifications; to keep the presentation manageable, it contains only the minimal
Magma verifications that certify our claims, omitting the demanding computations that led to them.

\subsubsection*{Notation}

As usual, the holomorphic Euler characteristic of a surface $S$ is denoted by $\chi(S),$ the geometric genus by $p_g(S),$
the irregularity by $q(S),$ and a canonical divisor by $K_S.$
A $(-m)$-curve is a curve isomorphic to $\m P^1$ with self-intersection $-m$.
Linear equivalence of divisors is denoted by $\equiv$.

\subsubsection*{Acknowledgments}

We would like to thank Margarida Mendes Lopes, Rita Pardini, Christian Gleissner, Roberto Pignatelli,
Matthew Stover and John Voight for interesting and useful conversations.

This research was partially financed by Portuguese Funds through FCT
(Funda\c c\~ao para a Ci\^encia e a Tecnologia) within the Project UID/00013:
Centro de Matem\'atica da Universidade do Minho (CMAT/UM).


\section{A method to detect highly singular members in a family}\label{sec:method}

Let $\mathcal{F}$ be a family of algebraic varieties defined by explicit equations depending on $n$ parameters. 
Assume that the general member of $\mathcal{F}$ is smooth.
Our goal is to detect members of $\mathcal{F}$ with a large number of ordinary double points (nodes). 
The method we employ is based on the following inductive strategy.

\medskip

\noindent\textbf{Step 1: Detecting nodal members.}
The locus of varieties in $\mathcal{F}$ with at least one node is a subvariety $F_1$ of codimension $1$ in the parameter space. 
This locus is cut out by a polynomial equation
\[
   f_1 = 0
\]
in the parameter space. Our first task is to determine such a defining equation.

\medskip

\noindent\textbf{Step 2: Working over finite fields.}
To make the computation feasible, we pass to a finite field $\mathbb{F}_q$. 
Provided that the family $\mathcal{F}$ is sufficiently large, we expect to find many members over $\mathbb{F}_q$ with (at least) one node, 
without the need to extend the base field. 
Each such member corresponds to a rational point in the parameter space lying on the hypersurface $F_1$.

\medskip

\noindent\textbf{Step 3: Interpolation of $f_1$.}
By collecting sufficiently many points in the parameter space corresponding to one-nodal varieties, 
we recover the polynomial $f_1$ using interpolation. 
In practice, we employ the \texttt{LinearSystem} function in Magma to determine polynomials passing
through the given set of points. 
If $\deg(f_1)$ is not too large, this interpolation succeeds and yields the desired equation.

\medskip

\noindent\textbf{Step 4: Iteration for higher nodes.}
Once $f_1$ is known, we restrict to the subfamily $F_1$ defined by $f_1=0$ and repeat the process. 
We then search within this locus for members with two nodes, collect the corresponding parameter points, 
and interpolate to recover further defining equations. 
The resulting locus is a subfamily $F_2$ of codimension $2$ in the parameter space
(in general defined by more than two equations). 
This procedure need not be carried out one node at a time: at each step, 
we may impose the existence of several nodes simultaneously. 
Proceeding inductively, we obtain a nested sequence of subfamilies
\[
   F_1, F_2,\ldots
\]
whose final locus parametrizes varieties with many nodes.

\medskip

\noindent\textbf{Step 5: Field extensions.}
As the iteration proceeds and the dimension of the parameter subfamily decreases, it becomes increasingly difficult to find rational points over the base finite field. In such cases, it may be necessary to pass to field extensions in order to locate enough points. Success at this stage depends strongly on the particular situation.

\medskip

\noindent\textbf{Step 6: Lifting to characteristic zero.}
The defining equations obtained above are computed modulo primes, possibly in finite field extensions. 
We lift these equations to characteristic zero. 
Concretely, the coefficients of the interpolated equations are first collected modulo several good primes. 
If an extension field was required, the coefficients are expressed in terms of a fixed basis of the extension, 
so that each coefficient can be represented by a tuple of elements in the base finite field. 
These tuples are then lifted simultaneously via the Chinese remainder theorem (CRT), and \texttt{RationalReconstruction} in Magma
is applied to recover the corresponding rational numbers.

\medskip

\noindent
Although we have described the procedure for nodes, the method can be adapted without difficulty 
to search for varieties with other types of singularities.

\begin{remark}
The interpolation step relies on the efficient computation of linear systems of polynomials. 
In practice we use the \texttt{LinearSystem} functionality in Magma, which has recently been 
completely rewritten by the author. The new version is substantially faster and allows 
interpolation through very large sets of points (on the order of $30000$ points or more), 
significantly increasing the chances of recovering the correct defining equations.
\end{remark}

\section{A 4-dimensional family of $\,\mathbb{Z}/2$-Godeaux surfaces with four nodes}

In \cite{DR} it was shown that the moduli space of $\mathbb{Z}/2$-Godeaux surfaces is irreducible of dimension $8$.
The computations there were organized so that the output included a main $8$-dimensional family,
together with certain families of dimension $\leq 7$ lying in its closure.
In this section we focus on one such subfamily, denoted by $\mathcal M_2^1$.

\medskip

The existence of a node imposes one independent condition on the parameters.  
One would therefore expect the locus of surfaces in $\mathcal M_2^1$ with four nodes to have codimension~$4$.  
Surprisingly, computations revealed the existence of a subfamily of codimension~$3$, i.e.\ of dimension~$4$, parametrizing surfaces with four nodes.  
This was detected and confirmed via the interpolation method described in Section~\ref{sec:method}.

\medskip

\noindent\textbf{Step 1: Producing nodal Godeaux surfaces efficiently.}  
The first task is to generate many $\mathbb{Z}/2$-Godeaux surfaces with nodes, to provide enough data for interpolation.  
This is made possible by working with the universal cover: recall that the universal cover of a $\mathbb{Z}/2$-Godeaux surface is a surface with invariants $p_g=1$, $q=0$, $K^2=2$. Its bicanonical map realizes it as an octic surface in $\mathbb{P}^3$, given by an explicit homogeneous equation
\[
   f_8(x_0,\ldots,x_3; p_1,\ldots,p_7) = 0,
\]
where the $p_i$ are the parameters.  

To detect singular members rapidly, we regard $f_8$ in the extended space with coordinates
\[
x_0,\ldots,x_3,\quad p_1,\ldots,p_7
\]
and impose the vanishing of the partial derivatives of $f_8$ with respect to $x_0,\ldots,x_3$. 
This gives a system of equations that detects when a given point \mbox{$(x_0:x_1:x_2:x_3)$}
is a singular point of the surface 
for the given choice of parameters $(p_1,\ldots,p_7)$.  

By sampling the parameters randomly, one can efficiently find many surfaces with at least one node. 
This procedure is fast because the condition of having a single node is codimension one in the parameter space, 
so such surfaces appear with high probability in random sampling. 
Once one node is obtained, further nodes can be produced by continuing the random search. 
Since the locus of surfaces with four nodes has dimension $4$, this random method is sufficient to find 
a large supply of examples for our purposes.

\medskip

\noindent\textbf{Step 2: Detecting the locus of four-nodal surfaces.}  
A first indication of this unexpected codimension came from an abundance of rational points
in the parameter space of $\mathcal M_2^1$ corresponding to four-nodal surfaces. 
The density of such points suggested the presence of a component of codimension $3$ rather than $4$. 

\medskip

\noindent\textbf{Step 3: Interpolation.}  
As explained in Section~\ref{sec:method}, we carried out interpolation using the \texttt{LinearSystem} functionality in Magma.  
To avoid complications arising from the possible presence of multiple components, the interpolation was performed with 
linear systems $L$ of polynomials through $n$ points, with $n$ only slightly larger than $\dim L$. 
This ensures that the resulting polynomials capture all relations satisfied by the chosen points, but avoids overfitting 
to isolated solutions.

The interpolation produced a collection of polynomials defining a subvariety $Z \subset \mathcal M_2^1$. 
At this stage it was unclear whether $Z$ was irreducible,
or whether it consisted of a genuine component together with a number of isolated points.

\medskip

\noindent\textbf{Step 4: Removing isolated points.}  
To separate the desired component, we computed the dimension of the tangent space of $Z$ at 
each point $p$ used in the interpolation.
Whenever this dimension was zero, the point $p$ was discarded, since it could not lie on a positive-dimensional component.  

After removing all such points, we repeated the interpolation using the remaining set.  
The resulting equations defined a 4-dimensional subvariety of $\mathcal M_2^1$, free from the isolated solutions.

\section{A $\mathbb{Z}/2$-Godeaux surface with singular set\\ $2\mathsf A_1+2\mathsf A_3$}

Continuing with the method described above, we detected a $2$-dimensional family of $\mathbb{Z}/2$-Godeaux surfaces 
with six nodes. Recall that an $\mathsf A_1$ singularity is an ordinary double point, whose minimal resolution introduces a single $(-2)$-curve, 
while an $\mathsf A_3$ singularity is a rational double point whose resolution produces a chain of three $(-2)$-curves. 
The degeneration of two nodes into an $\mathsf{A}_3$ singularity imposes one further condition on the parameter space.

\medskip

\noindent\textbf{Step 1: From nodes to higher singularities.}  
Within the $2$-dimensional family of six-nodal surfaces, we searched for members with more complicated singularities. 
By repeating the interpolation procedure on this family, we obtained a $1$-dimensional locus of surfaces with singular set $4\mathsf A_1 + 1\mathsf A_3$. 

\medskip

\noindent\textbf{Step 2: Specialization to $2\mathsf A_1+2\mathsf A_3$.}  
Over several finite fields $\mathbb{F}_p$, we examined the resulting families point by point. 
In each case we were able to locate two surfaces whose singular sets are of type $2\mathsf A_1+2\mathsf A_3$,
thus providing candidates for the target configuration.

\medskip

\noindent\textbf{Step 3: Lifting to characteristic zero.}  
For each prime $p_i$ in our computations, the equations admitted two possible solutions, $a_i$ and $b_i$, corresponding to the same unknown rational number.  
Since we cannot choose between $a_i$ and $b_i$ directly, we instead work with the data
\[
   (x-a_i)(x-b_i) = x^2 - (a_i+b_i)x + a_i b_i,
\]
whose coefficients are independent of the ordering of the two roots.  
As above, we lift these coefficients from the finite fields via the Chinese remainder theorem and
\texttt{RationalReconstruction} in Magma.  

\medskip

\noindent
This process produces a sequence of integer numbers and a sequence of rational numbers. By choosing the integer ones
we show the existence of a $\mathbb{Z}/2$-Godeaux surface with singular set $2\mathsf A_1+2\mathsf A_3$
inside the previously constructed family of six-nodal surfaces
(the rational ones give a non-isomorphic Godeaux surface with the same type of singular set, but we don't study it here).

\section{Searching divisibility relations on $X'$}\label{divisibility}

Let $X$ be the $\mathbb{Z}/2$-Godeaux surface with singular set $2\mathsf A_1+2\mathsf A_3$ constructed above, 
and let $X'$ be its smooth minimal resolution. 
The resolution introduces a total of eight exceptional $(-2)$-curves: 
$N_1, N_2$ from the two nodes, $N_3,N_4,N_5$ from the first $\mathsf A_3$, 
$N_6,N_7,N_8$ from the second $\mathsf A_3$.
The intersection matrix of the curves $N_1,\ldots,N_8$ and $K_{X'}$ is:
\begin{verbatim}
[-2  0  0  0  0  0  0  0  0]
[ 0 -2  0  0  0  0  0  0  0]
[ 0  0 -2  1  0  0  0  0  0]
[ 0  0  1 -2  1  0  0  0  0]
[ 0  0  0  1 -2  0  0  0  0]
[ 0  0  0  0  0 -2  1  0  0]
[ 0  0  0  0  0  1 -2  1  0]
[ 0  0  0  0  0  0  1 -2  0]
[ 0  0  0  0  0  0  0  0  1]
\end{verbatim}

Since
\[
   b_2(X') \;=\; 12\,\chi(\mathcal{O}_{X'}) - K_{X'}^2 + 4q(X') - 2 \;=\; 9,
\]
there are no other numerically independent curves on $X'$.

\medskip

In order to prove the existence of a $\mathbb{Z}/2 \times \mathbb{Z}/4$-abelian cover ramified over the singularities, 
we must find divisors on $X'$ that give the required $2$- and $4$-divisibility relations. 
We attempt to determine the “shape” of possible divisors by testing for numerical dependencies. 
The idea is to append to the list $\{N_1,\dots,N_8,K_{X'}\}$ a hypothetical curve $C$, 
write down the full intersection matrix of these $10$ curves, 
and then vary the intersection numbers of $C$ with the existing ones randomly. 
For each trial, we compute the nullspace of the resulting intersection matrix.  

The most promising relations obtained in this way are the following:
\begin{equation}\label{eq:divisor-relations}
\begin{aligned}
8K_{X'} &\equiv 4C' + 2N_1 + N_3 + 2N_4 + 3N_5 + 3N_6 + 2N_7 + N_8, \\
4K_{X'} &\equiv 2D' + N_1 + N_2 + N_6 + 2N_7 + N_8.
\end{aligned}
\end{equation}
Here $C',$ $D'$ denote the strict transforms on $X'$ of curves $C,$ $D$ on $X$ that will be computed in the next section.

\section{Computation of the curves $C$ and $D$}\label{eqsCD}

Let $X$ be the $\mathbb{Z}/2$-Godeaux surface with singular set $2\mathsf A_1+2\mathsf A_3$, and let $C,D \subset X$ denote the images 
of the hypothetical curves $C',D'$ on the minimal resolution $X'$ that appear in the divisor relations \eqref{eq:divisor-relations}.  
In this section we describe how explicit equations for $C$ and $D$ were obtained,
and we use them to conclude that (\ref{eq:divisor-relations}) indeed hold.

\medskip

We start from explicit equations for the \'etale $\mathbb{Z}/2$-cover $Y$ of $X$, 
realized as a singular surface in $\mathbb{P}^7$.  
From there we map the surface $X$ into $\mathbb{P}^6$ via 
its 4-canonical map, obtaining a surface containing 
the four isolated singularities of type
$2\mathsf A_1 + 2\mathsf A_3$ (and an additional disjoint singular curve).   

\medskip

\noindent\textbf{Computation of $D$.}  
To find $D \subset X$, we consider the linear system of hyperplanes in $\mathbb{P}^6$ 
passing through three of the singular points: the two nodes and one $\mathsf A_3$.  
We search for hyperplanes whose intersection with $X$ has multiplicity $2$ along a curve.  
Working over several finite fields $\mathbb{F}_{p}$, random sampling produces, for many $p$,
two distinct such hyperplanes. Their product defines a quadric containing $D$.  
This quadric is lifted to characteristic zero via the Chinese remainder theorem and 
\texttt{RationalReconstruction} in Magma.

\medskip

\noindent\textbf{Computation of $C$.}  
To find $C \subset X$, we consider the linear system of hyperplanes in $\mathbb{P}^6$ 
passing through the two $\mathsf A_3$ points and one node.  
We search for hyperplanes whose intersection with $X$ splits into two irreducible components of the same degree.  
As above we lift the result to characteristic zero, and we take $C$ to be one of these components.
Then we use the Magma function \texttt{LinearSystem} to find an element of $|8K_X|$ that cuts the surface $X$
with multiplicity 4 at the curve $C$
(as before, first over finite fields then lifting).

\begin{proposition}\label{relations}
The relations \eqref{eq:divisor-relations} hold as linear equivalences in $\Pic(X')$.
\end{proposition}

\begin{proof}
There exist positive integers $a_i, b_i$ such that
\begin{equation*}
\begin{aligned}
8K_{X'} &\equiv 4C' + a_1N_1 + a_3N_3 + a_4N_4 + a_5N_5 + a_6N_6 + a_7N_7 + a_8N_8, \\
4K_{X'} &\equiv 2D' + b_1N_1 + b_2N_2 + b_6N_6 + b_7N_7 + b_8N_8.
\end{aligned}
\end{equation*}

We check computationally that both $C$ and $D$ are irreducible and smooth.
Thus $C'N_i\leq 1$ and $D'N_i\leq 1,$\ $\forall i$.

To find the numerical divisibility relations involving the curves $N_1,\ldots,N_8,$ $C'$ and $D',$
we proceed as explained in Section \ref{divisibility} and compute the nullspace of the intersection
matrix for all possible cases,
using the information we have about those curves from construction.
This is available in one of the arXiv ancillary files.

These computations show that the numbers $a_i, b_i$
are exactly as predicted by equations \eqref{eq:divisor-relations},
thereby confirming that these relations do indeed hold.
\end{proof}

\section{The $\mathbb{Z}/2 \times \mathbb{Z}/4$ abelian cover}

We now construct the abelian cover used in the sequel.  We recall first the part of
Pardini's theory that is needed for this construction.

Let $Y$ be a smooth surface and let $G$ be a finite abelian group.  A normal abelian
$G$-cover $\pi\colon V\to Y$ is described by its \emph{building data}
\cite[Theorem~2.1]{Pa}.  These data consist of
\begin{itemize}[leftmargin=*]
\item for every cyclic subgroup $H$ of $G$ and every generator
$\psi$ of the group of characters $H^*$, a reduced effective divisor $D_{H,\psi}$ on $Y$, with total
support having normal crossings;
\item for every character $\chi\in G^*$, a divisor class $L_\chi\in\Pic(Y)$.
\end{itemize}
If $\chi|_H=\psi^{a_\chi}$ with $0\leq a_\chi<|H|$, then the building data must
satisfy the relations
\begin{equation}\label{eq:pardini-relations}
 L_\chi+L_{\chi'}\equiv L_{\chi\chi'}+
 \sum_{H,\psi}\epsilon^{H,\psi}_{\chi,\chi'}D_{H,\psi},
\end{equation}
where
\[
 \epsilon^{H,\psi}_{\chi,\chi'}=
 \begin{cases}
    0, & \text{if } a_\chi+a_{\chi'} < |H| \\
    1, & \text{otherwise. } 
\end{cases}
\]
Conversely, if these relations hold, Pardini's construction produces a normal
$G$-cover with the prescribed inertia groups along the components of the branch
divisor.

We have the following equivalent \emph{reduced form} \cite[Proposition~2.1]{Pa}.  Suppose that
$G^*$ is the direct sum of the cyclic subgroups generated by characters $\chi_1,\ldots,\chi_s$ of orders
$d_1,\ldots,d_s$.  For a fixed pair $(H,\psi)$ write
$\chi_i|_H=\psi^{a_i(H,\psi)}$, with $0\leq a_i(H,\psi)<|H|$.  Then it is enough
to prescribe the divisors $D_{H,\psi}$ and divisor classes $L_i=L_{\chi_i}$ satisfying
\begin{equation}\label{eq:reduced-pardini-relations}
 d_iL_i\equiv
 \sum_{H,\psi}\frac{d_i\,a_i(H,\psi)}{|H|}\,D_{H,\psi},
 \qquad i=1,\ldots,s.
\end{equation}
The remaining $L_\chi$ are then obtained from \eqref{eq:pardini-relations}.

\begin{proposition}\label{prop:existence-abelian-cover}
Let $X'$ be the smooth $\mathbb Z/2$-Godeaux surface
constructed above, with exceptional curves $N_1,\ldots,N_8$ labelled as in
Section~\ref{divisibility}.  There exists a normal
$G$-cover
\[
        \pi'\colon S'\longrightarrow X',
        \qquad G\cong \mathbb Z/2\times \mathbb Z/4,
\]
whose reduced branch data are supported on $N_1,\ldots,N_8$.
\end{proposition}

\begin{proof}
Let $G=\mathbb Z/2\times\mathbb Z/4$.  We choose generators
$\chi_2,\chi_5$ of $G^*$ of orders $2$ and $4$, respectively; in the Magma
notation used below these are the characters $T[2]$ and $T[5]$.

The group $G$ has three cyclic subgroups of order $2$ and two cyclic subgroups of
order $4$.  Denote them by
\[
  H_1,H_2,H_3,H_4,H_5,
  \qquad |H_1|=|H_2|=|H_3|=2,
  \quad |H_4|=|H_5|=4.
\]
For each $H_j$ choose a generator $\psi_j$ of $H_j^*$.  The reduced branch
divisors are
\begin{equation}\label{eq:chosen-branch-divisors}
\begin{array}{c|c}
(H,\psi) & D_{H,\psi} \\\hline
(H_1,\psi_1) & N_4+N_7 \\
(H_2,\psi_2) & N_1 \\
(H_3,\psi_3) & N_2 \\
(H_4,\psi_4) & N_3 \\
(H_4,\psi_4^{-1}) & N_5 \\
(H_5,\psi_5) & N_6 \\
(H_5,\psi_5^{-1}) & N_8 .
\end{array}
\end{equation}
The union of these curves is a normal-crossing divisor: the only intersections are
those in the two $\mathsf A_3$-chains, and all intersections are transverse.

For each character $\chi$ and each pair $(H,\psi)$ there is a unique integer
$0\leq a_\chi(H,\psi)<|H|$ such that
$\chi|_H=\psi^{a_\chi(H,\psi)}$.  The following Magma code computes these
integers.
\begin{verbatim}
function ExponentFromCharRestriction(Chi,H,psi)
  res:=Restriction(Chi,H);
  for n in [0..Order(H)-1] do
    if res eq psi^n then return n;end if;
  end for;
end function;

G:=SmallGroup(8,2);
T:=CharacterTable(G);
C:=[q`subgroup:q in CyclicSubgroups(G)|Order(q`subgroup) ne 1];
S:=[*[q:q in Characters(H)|Order(q) eq Order(H)]:H in C*];
[[[Order(Chi)*ExponentFromCharRestriction(Chi,C[i],psi)/Order(C[i])
   :psi in S[i]]:i in [1..#S]]:Chi in [T[2],T[5]]];
\end{verbatim}
The output is
\begin{verbatim}
[
[[0],[1],[1],[0,0],[1,1]],
[[2],[2],[0],[1,3],[3,1]]
]
\end{verbatim}
Notice that here $T[2]$ and $T[5]$ are characters such that $G^* \cong \mathbb{Z}/2 \times \mathbb{Z}/4$ is the direct sum of the subgroups they generate.

Summing up, for the ordered list of branch divisors in
\eqref{eq:chosen-branch-divisors}, the coefficients appearing in
\eqref{eq:reduced-pardini-relations} are
\begin{equation}\label{eq:coefficient-table}
\begin{array}{c|ccccccc}
       & N_4+N_7 & N_1 & N_2 & N_3 & N_5 & N_6 & N_8 \\\hline
\chi_2 & 0&1&1&0&0&1&1 \\
\chi_5 & 2&2&0&1&3&3&1 .
\end{array}
\end{equation}
Thus Pardini's reduced relations become
\begin{equation}\label{eq:reduced_data}
\begin{aligned}
2L_2 &\equiv N_1+N_2+N_6+N_8, \\
4L_5 &\equiv 2N_1+N_3+2N_4+3N_5+3N_6+2N_7+N_8.
\end{aligned}
\end{equation}
Comparing with relations \eqref{eq:divisor-relations} (see Proposition \ref{relations}), we finally take
\begin{equation}
\begin{aligned}
L_2 &:\equiv 2K_{X'}-D'-N_7, \\
L_5 &:\equiv 2K_{X'}-C'
\end{aligned}
\end{equation}
and the $G$-cover of $X'$ is well defined.
\end{proof}

\section{The fake quadric}

\begin{proposition}\label{prop:fake-quadric}
Let $S$ be the surface obtained from the covering in Proposition~\ref{prop:existence-abelian-cover}
by contracting the curves lying over the exceptional divisor of $X'\to X$.  Then
$S$ is a smooth minimal surface of general type with
\[
       K_S^2=8,\qquad p_g(S)=q(S)=0.
\]
\end{proposition}

\begin{proof}
We have a commutative diagram
\[
\begin{CD}
S' @>>> S \\
@VVV     @VVV \\
X' @>>> X
\end{CD}
\]

The $\mathbb{Z}/2 \times \mathbb{Z}/4$ covering $S'\longrightarrow X'$ splits as three double coverings,
each ramified on 4 disjoint $(-2)$-curves, thus $S$ is smooth.

The canonical divisor $K_S$ is equivalent to the pullback of $K_X,$ hence $K_S^2=8.$

In order to compute the geometric genus $p_g(S)$ and the holomorphic Euler chracteristic $\chi(S),$ 
we need to describe the remaining divisors 
$$L_3,\ L_4,\ L_6,\ L_7,\ L_8,\ \ \ \ L_i:=L_{\chi_i},$$ that appear in the building data of the cover.
We give a Magma function that computes each number $\epsilon^{H,\psi}_{\chi,\chi'}$ that appears in (\ref{eq:pardini-relations}).
Then it is easy to obtain all relations that give the building data of the cover.
The complete computer code is included in the ancillary files on the arXiv version of this paper.

The divisors are as follows:
\begin{equation*}
\begin{aligned}
L_6 &\equiv L_2+L_5-N_1-N_6, \\
L_7 &\equiv L_3+L_5-N_5-N_6, \\
L_8 &\equiv L_3+L_6-N_5-N_8, \\
L_3 &\equiv L_5+L_5-N_1-N_4-N_5-N_6-N_7, \\
L_4 &\equiv L_7+L_8-N_3-N_4-N_6-N_7-N_8.
\end{aligned}
\end{equation*}

From here we get that the full set of divisors is:
\begin{equation*}
\begin{aligned}
L_2 &\equiv 2K_{X'} - D' - N_7, \\
L_3 &\equiv 4K_{X'} - 2C' - N_1 - N_4 - N_5 - N_6 - N_7, \\
L_4 &\equiv 14K_{X'} - 6C' - D' - 3N_1 - N_3 - 3N_4 - 4N_5 - 5N_6 - 4N_7 - 2N_8, \\
L_5 &\equiv 2K_{X'} - C', \\
L_6 &\equiv 4K_{X'} - C' - D' - N_1 - N_6 - N_7, \\
L_7 &\equiv 6K_{X'} - 3C' - N_1 - N_4 - 2N_5 - 2N_6 - N_7, \\
L_8 &\equiv 8K_{X'} - 3C' - D' - 2N_1 - N_4 - 2N_5 - 2N_6 - 2N_7 - N_8.
\end{aligned}
\end{equation*}

Finally, using relations \eqref{eq:divisor-relations} we obtain
$$L_4 \equiv 6K_{X'} - 2C' - D' - N_1 - N_4 - N_5 - 2N_6 - 2N_7 - N_8.$$

We show computationally (see the ancillary arXiv files) that the linear system $|7K_{X} - 2C - D|$ is empty,
thus $|7K_{X'} - 2C' - D'|$ is also empty, which implies that
$h^0(X',K_{X'}+L_4)=0.$ Analogously we show that
$$h^0(X',K_{X'}+L_i)=0,\ \ \ i=2,\dots,8.$$

Now from  \cite[Proposition 4.1]{Pa}, we compute the geometric genus of $S$:
$$p_g(S)=p_g(X')+\sum_{i=2}^8 h^0(X',K_{X'}+L_i)=0.$$

The holomorphic Euler characteristic follows from  \cite[Proposition 4.2]{Pa}:
$$\chi(S)=8\chi(X')+\sum_{i=2}^8 \frac{1}{2}L_i(K_{X'}+L_i)=1.$$
\end{proof}

\section{$S$ is not a quotient of a product of curves}\label{sec:not-product-quotient}

\begin{theorem}\label{thm:not-product-quotient}
The fake quadric $S$ constructed above does not arise as a quotient of a product of curves.
\end{theorem}

\begin{proof}
First note that, if a fake quadric is a product-quotient surface, then it is
actually isogenous to a product.  Indeed, the product-quotient formula
\cite[Corollary~1.6]{BP}
\[
        K^2=8\chi-\frac{1}{3}B
\]
forces the basket correction term $B$ to vanish, because here $K_S^2=8$ and
$\chi(\mathcal O_S)=1$.  Thus the quotient model is smooth, and the action on
the product is free.  It is therefore enough to exclude the possibility that $S$ is
isogenous to a product.

Assume, for a contradiction, that
\[
        S\cong (C_1\times C_2)/\Gamma,
\]
where $C_1$ and $C_2$ are smooth curves of genus at least $2$ and $\Gamma$ acts
freely.  The surface $S$ carries the action of
$G\cong\mathbb Z/2\times\mathbb Z/4$ coming from the abelian cover construction.
By Theorem~\ref{thmAppendix} in the Appendix, every automorphism of a variety
isogenous to a product lifts to an automorphism of the covering product.  Hence
the $G$-action on $S$ lifts to an action on $C_1\times C_2$.

Thus there is a group $\widetilde\Gamma\subset\Aut(C_1\times C_2)$ such that
\[
        (C_1\times C_2)/\widetilde\Gamma
        \cong S/G=X.
\]
Therefore $X$ is a product-quotient surface.  The
action of $\widetilde\Gamma$ is either unmixed, in which case the results of
Bauer--Pignatelli \cite{BP} apply, or mixed, in which case the results of
Frapporti--Pignatelli \cite{FP} apply.

In the unmixed case, the classification of Bauer--Pignatelli shows that no product-quotient Godeaux surface has quotient basket of type
$
2\mathsf A_1+2\mathsf A_3 .
$
In the mixed case, the classification of Frapporti--Pignatelli allows this basket only for Godeaux surfaces whose fundamental group is $\mathbb Z/4$. However, the surface $X$ constructed above has fundamental group $\mathbb Z/2$. This contradiction shows that $S$ cannot be isogenous to a product, hence $S$ does not arise as a quotient of a product of curves.

\end{proof}

\bibliography{References}

\begin{thebibliography}{BCG08}

\bibitem[BCG08]{BCG}
I.~C. Bauer, F.~Catanese, and F.~Grunewald.
\newblock The classification of surfaces with {{\(p_g=q=0\)}} isogenous to a
  product of curves.
\newblock {\em Pure Appl. Math. Q.}, 4(2):547--586, 2008.

\bibitem[BCP97]{BCP}
W.~Bosma, J.~Cannon, and C.~Playoust.
\newblock The {M}agma algebra system. {I}. {T}he user language.
\newblock {\em J. Symbolic Comput.}, 24(3-4):235--265, 1997.
\newblock Computational algebra and number theory (London, 1993).

\bibitem[BK20]{BK}
L.~Borisov and J.~Keum.
\newblock Explicit equations of a fake projective plane.
\newblock {\em Duke Math. J.}, 169(6):1135--1162, 2020.

\bibitem[BP12]{BP}
I.~Bauer and R.~Pignatelli.
\newblock The classification of minimal product-quotient surfaces with
  {{\(p_{g}=0\)}}.
\newblock {\em Math. Comput.}, 81(280):2389--2418, 2012.

\bibitem[DR14]{DzRo}
A.~D{\v{z}}ambi{\'c} and X.~Roulleau.
\newblock Automorphisms and quotients of quaternionic fake quadrics.
\newblock {\em Pac. J. Math.}, 267(1):91--120, 2014.

\bibitem[DR26]{DR}
E.~Dias and C.~Rito.
\newblock Z/2-godeaux surfaces.
\newblock {\em Journal of Algebra}, 701:340--357, 2026.

\bibitem[FP15]{FP}
D.~Frapporti and R.~Pignatelli.
\newblock Mixed quasi-{\'e}tale quotients with arbitrary singularities.
\newblock {\em Glasg. Math. J.}, 57(1):143--165, 2015.

\bibitem[LSV19]{LSV}
B.~Linowitz, M.~Stover, and J.~Voight.
\newblock Commensurability classes of fake quadrics.
\newblock {\em Sel. Math., New Ser.}, 25(3):39, 2019.

\bibitem[Par91]{Pa}
R.~Pardini.
\newblock Abelian covers of algebraic varieties.
\newblock {\em J. Reine Angew. Math.}, 417:191--213, 1991.

\end{thebibliography}


\begin{thebibliography}{99}

\bibitem{Cat00}
F.~Catanese,
Fibred surfaces, varieties isogenous to a product and related moduli spaces,
\emph{Amer. J. Math.}, \textbf{122} (2000), no.~1, 1--44. 

\bibitem{FGR25}
F.~Fallucca, C.~Gleissner, and N.~Ruhland,
On rigid varieties isogenous to a product of curves,
Preprint, arXiv:2504.11032 [math.AG], 2025.

\bibitem{N71}
R.~Narasimhan,
\emph{Several complex variables},
Chicago Lectures in Mathematics, University of Chicago Press, Chicago, IL, 1995.
(Reprint of the 1971 original).

\end{thebibliography}

\vspace{0.8cm}

\noindent Carlos Rito
\vspace{0.1cm}
\\ Centro de Matem\'atica, Universidade do Minho - Polo CMAT-UTAD
\vspace{0.1cm}
\\ Universidade de Tr\'as-os-Montes e Alto Douro, UTAD
\\ Quinta de Prados
\\ 5000-801 Vila Real, Portugal
\vspace{0.1cm}
\\ www.utad.pt, {\tt crito@utad.pt}

\newpage

\appendix
\begin{center}
  {\Large Appendix:\\ \vspace{.4em} The automorphism group of a variety\\ isogenous to a product \par}
  \vspace{1em}
  {\scshape Christian Gleissner and Noah Ruhland \par}
  \vspace{1em}
\end{center}

In this appendix, we determine the automorphism group of a variety isogenous to a product in arbitrary dimension, generalizing the results from the surface case, cf. \cite{Cat00}.

 \begin{definition}\label{VarIso}
 A complex variety $X$ is said to be isogenous to a product if it is isomorphic to a quotient 
 \[
X\simeq  (C_1 \times \ldots \times C_n)/G,
 \]
 where the $C_i$ are smooth projective curves of higher genus  and $G$ is a finite group acting freely on the product $C_1\times \ldots \times C_n$. 
 \end{definition}
 
 The key to determine  $\Aut(X)$ is to show that any automorphism of $X$ lifts to an automorphism of the product of curves. 
For this purpose we have to investigate the structure of the Galois group $G$ and identify the technical conditions that enables  us to lift. 
Note that any automorphism $g$ of a product $C_1 \times \ldots \times C_n$ of higher genus curves lifts to the universal cover which is a product of unit discs $\Delta^n$. By \cite[Proposition 3, p.68]{N71}, we have $$\Aut(\Delta^n)\simeq \Aut(\Delta)^n \rtimes \mathfrak S_n.$$ This  implies that $g$ is  diagonal, up to a  permutation of factors. Thus, for any group $ G\subset \Aut(C_1\times \ldots \times C_n)$ the diagonal subgroup 
 \[
 G^0:=G\cap [\Aut(C_1) \times \ldots \times \Aut(C_n)]
 \]
 is normal and the quotient group $G/G^0$ embeds into $\mathfrak S_n$. The diagonal subgoup $G^0$ is also given as the intersection of the groups 
 \[
G_i := G\cap [\Aut(C_1 \times \ldots \times \widehat{C}_i \times \ldots \times C_n) \times \Aut(C_i)].
\]
The group $G_i$  is the largest subgroup of $G$ preserving  the  i-th factor of the product. Note that 
the kernel $K_i$ of the induced action $\psi_i \colon G_i \to \Aut(C_i)$  may be non-trivial, as $\psi_i$ is not necessarily injective. 

\begin{rem}\
\begin{enumerate}\label{minaction}
\item
By definition, the groups $G_i$, and therefore also the kernels $K_i$, are permuted under conjugation with elements in $G$. 
Hence, the same holds true for the intersections 
\[
H_i :=K_1\cap \ldots \cap \widehat{K}_i \cap \ldots \cap K_n. 
\]
This observation together with 
$$H_i\cap H_j=\bigcap_{i=1}^n K_i = \lbrace 1 \rbrace \quad \makebox{for all} \quad i \neq j$$ 
implies that  the product 
 $H:=\prod_{i=1}^n H_i$ is normal in  $G$. 
  \item 
 In \cite[Theorem 2.8]{FGR25}, the authors discuss the special case $G=G^0$. They show that an automorphism $f\in \Aut(X)$ lifts to the product $C_1\times \ldots \times C_n$ if all of the subgroups $H_i$ are trivial. Moreover, there is always a unique  realization of $X$ with this property.  It is called the minimal realization of $X$. 
 \item 
 We point out that a minimal realization  always exists, even if $G^0 \subsetneq G$:
For an arbitrary realization $X\simeq  (C_1 \times \ldots \times C_n)/G,$ we obtain the isomorphisms
\[
X\simeq \frac{(C_1\times \ldots\times C_n)}{G} \simeq \frac{(C_1\times \ldots\times C_n)/H}{G/H} \simeq 
\frac{C_1/H_1 \times \ldots \times C_n/H_n}{G/H}. 
\]
By construction, $\widetilde{H}_i = \lbrace 1 \rbrace$ holds for the induced action of $\widetilde{G}:=G/H$ on the product of the curves $\widetilde{C}_i:= C_i/H_i$. 
\end{enumerate}
\end{rem}

\begin{theorem}\label{thmAppendix}

Let $X=(C_1 \times \ldots \times C_n)/G$ be a minimal realization of a varietiy isogenous to a  product of curves, i.e. a realization with trivial  $H_i$. Then every automorphism of $X$ lifts to an automorphism of $C_1 \times \ldots \times C_n$. 
In particular 
\[
\Aut(X) \simeq N_{A}(G)/G,
\]
where $N_{A}(G)$ is the normalizer of $G$ in $A:=\Aut(C_1\times \ldots \times C_n)$. 

\end{theorem}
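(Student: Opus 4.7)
My plan is to lift automorphisms of $X$ through the universal cover $\Delta^n \to X$ and then descend them to the product $C_1 \times \ldots \times C_n$. Given $f \in \Aut(X)$, I would first use simple connectivity of $\Delta^n$ to lift $f$ to a biholomorphism $\tilde f \in \Aut(\Delta^n)$ whose conjugation action normalizes $\Gamma := \pi_1(X)$. Using the splitting $\Aut(\Delta^n) = \Aut(\Delta)^n \rtimes \mathfrak S_n$ recalled in the excerpt, decompose $\tilde f = (\alpha, \sigma)$ with $\alpha \in \Aut(\Delta)^n$ and $\sigma \in \mathfrak S_n$.

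The heart of the proof is to produce a lift $\hat f \in A := \Aut(C_1 \times \ldots \times C_n)$ covering $f$. This amounts to showing that conjugation by $\tilde f$ normalizes the subgroup $\Pi := \Gamma_1 \times \ldots \times \Gamma_n = \pi_1(C_1 \times \ldots \times C_n)$, which sits inside $\Gamma$ as the kernel of $\Gamma \twoheadrightarrow G$. Since $\Aut(\Delta)^n$ is normal in $\Aut(\Delta^n)$, conjugation by $\tilde f$ automatically preserves $\Gamma^0 := \Gamma \cap \Aut(\Delta)^n$; the remaining work is to verify that $\Pi$ is preserved inside $\Gamma^0$. This is where the minimality hypothesis enters: when all $H_i$ vanish, each factor $\Gamma_i \subset \Pi$ can be recovered intrinsically from $\Gamma^0$ as the set of elements whose projections to the remaining coordinates act trivially on $C_1, \ldots, \widehat{C}_i, \ldots, C_n$, since a nonzero $H_i$ would be precisely the obstruction allowing elements outside $\Pi$ to satisfy this condition. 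Combined with the fact that $\sigma$ permutes the projections of $\Aut(\Delta)^n$ and hence permutes the family $\{\Gamma_i\}$, this shows $\tilde f$ normalizes $\Pi$ and descends to $\hat f \in A$. In the diagonal case $G = G^0$ this reduces to \cite[Theorem 2.8]{FGR25}; in general I would combine that input with the analysis of the permutation action in Remark \ref{minaction}.

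Once the lift $\hat f \in A$ is constructed, it covers $f \in \Aut(X)$ and therefore normalizes $G \subset A$, i.e.\ $\hat f \in N_A(G)$. This yields a surjection $N_A(G) \twoheadrightarrow \Aut(X)$ whose kernel is precisely $G$ (as $G$ acts trivially on $X$), giving the claimed isomorphism $\Aut(X) \simeq N_A(G)/G$.

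The main obstacle will be the canonical recovery of $\Pi$ inside $\Gamma^0$ from the vanishing of all $H_i$. Without minimality, the product decomposition of $\Pi$ into its factors $\Gamma_i$ is not intrinsic to $\Gamma$, and then conjugation by $\tilde f$ need not permute the $\Gamma_i$; the desired lift and the identification $\Aut(X) \simeq N_A(G)/G$ would both fail. Verifying that triviality of the $H_i$ forbids exactly this pathology—using the structural description of $G_i, K_i, H_i$ and the normality of $H = \prod H_i$ in $G$ recalled in Remark \ref{minaction}—is the technical core of the argument.
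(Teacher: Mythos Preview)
Your strategy matches the paper's: lift $f$ to $F\in\Aut(\Delta^n)$ via the universal cover, and use normality of $\Aut(\Delta)^n$ in $\Aut(\Delta^n)\simeq\Aut(\Delta)^n\rtimes\mathfrak S_n$ to see that $F$ normalizes $\Gamma^0=\Gamma\cap\Aut(\Delta)^n$. The paper, however, organizes the descent more efficiently than you propose. Rather than trying to show directly that $F$ normalizes $\Pi=\pi_1(C_1\times\cdots\times C_n)$ by ``intrinsically recovering'' the factors $\Gamma_i$ inside $\Gamma^0$, the paper observes that $\Gamma^0$ is exactly $\pi_1(X^0)$ for the \emph{unmixed} quotient $X^0=(C_1\times\cdots\times C_n)/G^0$. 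Thus the inclusion $F\Gamma^0F^{-1}\subset\Gamma^0$ already gives a lift $\hat f\in\Aut(X^0)$, and since the $H_i$ for $G^0$ are contained in those for $G$ (hence trivial), one is now in the diagonal case and can invoke \cite[Theorem~2.8]{FGR25} as a black box to lift $\hat f$ further to $A$.

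This factorization through $X^0$ sidesteps precisely the step you flag as the ``main obstacle''. Your proposed intrinsic characterization of $\Gamma_i$ --- elements of $\Gamma^0$ whose projections to the other coordinates act trivially on the corresponding $C_j$ --- is not quite right as stated: that condition cuts out the preimage of $H_i$ in $\Gamma^0$, which under minimality is all of $\Pi$, not $\Gamma_i$; and in any case ``acts trivially on $C_j$'' already presupposes knowing $\Gamma_j$, so it is not intrinsic. Making your direct route precise would amount to reproving the content of \cite{FGR25}, whereas the paper simply reduces to it.
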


\begin{proof}
We consider the universal cover $\pi \colon \Delta^n \to X$. 
The fundamental group of $X$ is isomorphic to the group of covering transformations  of $\pi$, which can be written as 
\[
\Gamma:=\lbrace \gamma \in \Aut(\Delta^n) ~\vert ~ \exists ~ g \in G : ~ g\circ \rho = \rho \circ \gamma \rbrace, 
\]
where $\rho\colon \Delta^n \to C_1\times \ldots\times C_n$ is the universal cover of the product of the curves.  
Since  $\Aut(\Delta^n)\simeq \Aut(\Delta)^n \rtimes \mathfrak S_n$, we observe that the intersection 
$\Gamma^0:=\Gamma \cap \Aut(\Delta)^n$ is isomorphic to the fundamental group of the 
associated unmixed quotient 
\[
X^0:=(C_1 \times \ldots \times C_n)/G^0. 
\]
It suffices to  provide a lift of $f$ to an automorphism $\hat{f} \in \Aut(X^0)$, since by minimality we already know that any automorphism of $X^0$ has a  lift to an automorphism of  $C_1\times \ldots \times C_n$, cf. Remark \ref{minaction}. 
For the existence of $\hat{f} \in \Aut(X^0)$, it 
suffices to verify that 
$f_{\ast} \pi_1(X^0)  \subset  \pi_1(X^0) $. On the level of covering transformations, this amounts to the condition  
$$F\circ \Gamma^0 \circ F^{-1} \subset \Gamma^0,$$
where  $F\in \Aut(\Delta^n)$ is a lift of $f$ with respect to the universal cover $\pi$. We now verify this inclusion. 
Clearly, the conjugation $F\circ \Gamma^0 \circ F^{-1}$ is  contained in $\Gamma$, because $F$ is a lift of $f$. On the other hand, it is also contained in $\Aut(\Delta)^n$, because  $\Aut(\Delta)^n$ is 
  normal in $\Aut(\Delta^n)\simeq \Aut(\Delta)^n \rtimes \mathfrak S_n$. Combining both inclusions, we 
obtain $$F\circ \Gamma^0 \circ F^{-1} \subset \Gamma \cap \Aut(\Delta)^n=\Gamma_0$$   
and therfore the desired lift $\hat{f} \in \Aut(X^0)$. 
\end{proof}

\begin{rem}
The argument in the above proof  shows that a minimal realization of a variety isogenous to a product is unique. Indeed any biholomorpism 
\[
f \colon (C_1 \times \ldots \times C_n)/G \to  (D_1 \times \ldots \times D_n)/G'
\]
between two minimal realizations lifts to a biholomorphism $$\widehat{f} \colon C_1 \times \ldots \times C_n \to D_1 \times \ldots \times D_n.$$ Conjugation with $\widehat{f}$ provides an isomorphism between the Galois groups 
$$\widehat{f}\cdot G \cdot \widehat{f}^{-1} =G'.$$
\end{rem}

\vspace{0.8cm}

\noindent Christian Gleissner
\vspace{0.1cm}
\\ University of Bayreuth, Universit\"atsstr. 30, D-95447 Bayreuth, Germany
\\ {\tt Christian.Gleissner@uni-bayreuth.de}

\vspace{0.4cm}

\noindent Noah Ruhland
\vspace{0.1cm}
\\ University of Bayreuth, Universit\"atsstr. 30, D-95447 Bayreuth, Germany
\\ {\tt Noah.Ruhland@uni-bayreuth.de}

\end{document}